\newcommand{\Cov}[0]{\mathsf{Cov}}
\newcommand{\Var}[0]{\mathsf{Var}}
\newcommand{\E}[0]{\mathbb{E}}
\newcommand{\R}[0]{\mathbb{R}}
\newcommand{\Prob}[0]{\mathbb{P}}
\renewcommand{\tilde}{\widetilde}
\newcommand{\vertiii}[1]{{\left\vert\kern-0.25ex\left\vert\kern-0.25ex\left\vert #1 
    \right\vert\kern-0.25ex\right\vert\kern-0.25ex\right\vert}}
\newcommand\numberthis{\addtocounter{equation}{1}\tag{\theequation}}
\newcommand{\cB}{\mathcal{B}}
\newcommand{\cN}{\mathcal{N}}
\newcommand{\cP}{\mathcal{P}}
\newcommand{\cQ}{\mathcal{Q}}
\newcommand{\EE}{\mathbb{E}}
\newcommand{\PP}{\mathbb{P}}
\newcommand{\RR}{\mathbb{R}}
\newcommand*{\dd}{\, \mathsf{d}}
\newcommand{\vasti}{\bBigg@{3.5 }}
\newcommand{\vast}{\bBigg@{4}}
\newcommand{\Vast}{\bBigg@{5}}
\newcommand{\Vastt}{\bBigg@{7}}
\newcommand{\be}{\begin{equation}}
\newcommand{\ee}{\end{equation}}
\newcommand{\ba}{\begin{align}}
\newcommand{\ea}{\end{align}}
\newcommand{\baa}{\begin{align*}}
\newcommand{\eaa}{\end{align*}}
\newcommand*{\gauss}{\varphi_\sigma}
\newcommand*{\Gauss}{\mathcal{N}_\sigma}
\newcommand{\tv}{\delta_{\mathsf{TV}}}
\newcommand{\gtv}{\delta_{\mathsf{TV}}^{(\sigma)}}
\newcommand{\Chi}{\chi^2}
\newcommand{\gchi}{\chi^2_{\sigma}}
\newtheorem{theorem}{Theorem}
\newtheorem{lemma}{Lemma}
\newtheorem{proposition}{Proposition}
\newtheorem{corollary}{Corollary}
\newtheorem{definition}{Definition}
\begin{document}

\title{Limit Distributions for Smooth Total Variation and $\chi^2$-Divergence in High Dimensions} 
\author{
  \IEEEauthorblockN{Ziv Goldfeld}
  \IEEEauthorblockA{Cornell University\\
                    goldfeld@cornell.edu}
  \and
  \IEEEauthorblockN{Kengo Kato}
  \IEEEauthorblockA{Cornell University\\
                    kk976@cornell.edu}
\IEEEoverridecommandlockouts
\IEEEcompsocitemizethanks{
\IEEEcompsocthanksitem
}}
\maketitle

\begin{abstract}
    Statistical divergences are ubiquitous in machine learning as tools for measuring discrepancy between probability distributions. As these applications inherently rely on approximating distributions from samples, we consider empirical approximation under two popular $f$-divergences: the total variation (TV) distance and the $\Chi$-divergence. To circumvent the sensitivity of these divergences to support mismatch, the framework of Gaussian smoothing is adopted. We study the limit distributions of $\sqrt{n}\tv(P_n\ast\Gauss,P\ast\Gauss)$ and $n\Chi(P_n\ast\Gauss\|P\ast\Gauss)$, where $P_n$ is the empirical measure based on $n$ independently and identically distributed (i.i.d.) observations from $P$, $\Gauss:=\mathcal{N}(0,\sigma^2\mathrm{I}_d)$, and $\ast$ stands for convolution. In arbitrary dimension, the limit distributions are characterized in terms of Gaussian process on $\R^d$ with covariance operator that depends on $P$ and the isotropic Gaussian density of parameter $\sigma$. This, in turn, implies optimality of the $n^{-1/2}$ expected value convergence rates recently derived for $\tv(P_n\ast\Gauss,P\ast\Gauss)$ and $\Chi(P_n\ast\Gauss\|P\ast\Gauss)$. These strong statistical guarantees promote empirical approximation under Gaussian smoothing as a potent framework for learning and inference based on high-dimensional data.
\end{abstract}

\maketitle


\section{Introduction}





Statistical divergences are central to many fields, such as machine learning (ML), information theory and statistics. They quantify discrepancy between probability measures, which is central to those fields. Two fundamental statistical divergences are the total variation (TV) distance and the $\Chi$-divergence, on which we focus herein. TV and $\Chi$ fall under the broader framework of $f$-divergences \cite{csiszar2004information}. As such, they possess an array of important properties (data processing inequality, variational representation, etc.), making them appealing for analyzing and designing inference systems --- see, e.g., \cite{nowozin2016f,uehara2016generative} for  recent applications of $f$-divergences to generative modeling. Focusing on statistical applications, where only samples of the underlying distributions are available, naturally leads to the question of empirical approximation under TV and $\Chi$.

Suppose $P_n$ is the empirical measure induced by $n$ independently and identically distributed (i.i.d.) observations from a $d$-dimensional distribution $P$. We would like to consider the rate at which $P_n$ approaches $P$ under TV and $\Chi$. However, in general, $P_n$ does not converge to $P$ under the TV topology (e.g., if $P$ is absolutely continuous with respect to (w.r.t.) Lebesgue) \cite{devroye1990no}, while $\Chi(P\|Q)=\infty$ whenever $P$ is not absolutely continuous w.r.t. $Q$. To obtain a well-posed empirical approximation setup, we adopt the Gaussian smoothing framework of \cite{Goldfeld2019convergence} (see also \cite{Goldfeld2020GOT,goldfeld2020limit}). Accordingly, we define $\gtv(P_n,P):=\tv(P_n\ast\Gauss,P\ast\Gauss)$ and $\gchi:=\Chi(P_n\ast\Gauss\|P\ast\Gauss)$, where $P_n$ and $P$ are both convolved with an isotropic Gaussian measure $\Gauss:=\cN(0,\sigma^2\mathrm{I}_d)$. This alleviates the aforementioned pathologies since now both measures have densities supported on the entire space. Furthermore, \cite{Goldfeld2019convergence} showed that $\EE[\gtv(P_n,P)] = O(n^{-1/2})$ and $\EE[\gchi(P_n\|P)] = O(n^{-1})$ for subgaussian $P$ in any dimension $d$.\footnote{The full results have a prefactor of $c^d$ that quantifies the dependence on dimension. Still, $n$ and $d$ are decoupled and treating $d$ as fixed results in a convergence rate independent of it.} Our objective is to upgrade these results by characterizing the limit distributions of their (normalized) versions and establish optimality of the above rates. 

Building on empirical process theory and probability theory in Banach spaces\footnote{The reader is referred to, e.g., \cite{LeTa1991,VaWe1996,GiNi2016} as useful references.}, we characterize the limit distributions of $\sqrt{n}\gtv(P_n,P)$ and $n\gchi(P_n\|P)$ as $n\to\infty$. Both limits are given in terms of an integral operator of a centered Gaussian process $B_P^{(\sigma)}:=\big(B_P^{(\sigma)}(x)\big)_{x\in\R^d}$. The covariance function of $B_P^{(\sigma)}$ depends on the data distribution $P$ and the noise parameter $\sigma$. Gaussian-smoothing is crucial here since it allows reasoning about TV and $\Chi$ in terms of $L^p$ norms. With that perspective, the limit distribution results follow from the central limit theorem (CLT) in $L^1$ for TV and $L^2$ for $\Chi$. A direct consequence of the limit distribution results is the optimality of the expected value convergence rates derived in \cite{Goldfeld2019convergence}, which was not established therein. A concentration inequality for $\gtv$ via McDiarmid's inequality is also derived. Our results hold under milder assumptions on $P$ than in \cite{Goldfeld2019convergence}.

Most related to this work is \cite{goldfeld2020limit}, where the limit distribution of smooth empirical 1-Wasserstein distance was derived for all dimensions (the proof techniques employed therein are quite different from those in this work). A corresponding result for unsmoothed 1-Wasserstein is only known in the 1-dimensional case \cite{del1999central}. Limit distributions under TV, $\Chi$ and other $f$-divergences, were also studied before but under a framework different than ours. For example, \cite{bobkov2014berry} and \cite{bobkov2019renyi} consider the TV distance and $\Chi$-divergence between the distribution of a normalized sum of independent random variables and a Gaussian. They find conditions under which convergence holds and characterise the corresponding rates. Another related work is \cite{courtade2020bounds} where bounds on the Poincar{\'e} constant of convolved measures were derived, with application to limit distribution questions under the same framework as \cite{bobkov2014berry,bobkov2019renyi} (see also \cite{bobkov2019entropic}). These results significantly differ from those presented herein, as we focus on (smooth) empirical TV and $\Chi$, i.e., $\sqrt{n}\gtv(P_n,P)$ and $n\gchi(P_n\|P)$, as the random variable sequence of interest (as opposed to distance measure between the distribution of a normalized sum and its Gaussian limit). 

From a broader perspective, our results suggest that  smooth statistical divergences may be particularly well-suited for high-dimensional learning and inference tasks. Statistical divergences can be used to formulate a variety of ML tasks, from generative modeling to outlier detection to ensemble methods. Such formulations land themselves well for a theoretical analysis, but, their usefulness diminishes as the data dimension grows larger. This is because statistical divergences suffer from the curse of dimensionality (CoD) in empirical approximation, with error convergence rates effectively decaying as $n^{-1/d}$, where $n$ is the sample size and $d$ is the data dimension (see, e.g.,\cite{FournierGuillin2015}, \cite{nguyen2010estimating} and \cite{sriperumbudur2009integral} for CoD results for Wasserstein distances \cite{villani2008optimal}, $f$-divergences and integral probability metrics \cite{muller1997integral}, respectively). On the other hand, when smoothing is introduced, empirical convergence of TV and $\Chi$ accelerates to dimension-free rates, posing then them as favorable figures of merit.




\textbf{Notation:} Let $\| \cdot \|$ denote the Euclidean norm, and $x \cdot y$, for $x,y \in \R^{d}$, denote their inner product. Let $\cP(\RR^d)$ denote the class of Borel probability measures on $\RR^d$, and $\cB(\RR^d)$ denote the Borel $\sigma$-algebra on $\R^{d}$. The isotropic Gaussian measure on $\RR^d$ with parameter $\sigma > 0$ is denoted as  $\Gauss:=\cN(0,\sigma^2\mathrm{I}_d)$ and its probability density function is denoted as $\gauss$, i.e., $\varphi_{\sigma}(x) = (2\pi \sigma^{2})^{-d/2} e^{-\|x\|^{2}/(2\sigma^{2})}$, for $x \in \R^{d}$. Given $P,Q\in\cP(\RR^d)$, their convolution $P\ast Q\in\cP(\RR^d)$ is defined as $(P\ast Q)(\mathcal{A})=\int\int\mathds{1}_{\mathcal{A}}(x+y)\dd P(x)\dd Q(y)$ for $\mathcal{A} \in \cB(\R^{d})$, where $\mathds{1}_\mathcal{A}$ is the indicator function of $\mathcal{A}$. The convolution between $P \in \mathcal{P}(\R^{d})$ and a measurable function $f$ on $\R^{d}$ is defined as $P\ast f (x) = \int_{\R^{d}} f(x-y) \dd P(y)$, for $x \in \R^{d}$, whenever the latter integral is well-defined for all $x \in \R^{d}$ (cf. \cite{Folland1999} p. 271). 

Given $P \in \cP(\R^{d})$ and $X_1,\dots,X_n \sim P$ i.i.d., the empirical distribution is defined by $P_n:= n^{-1} \sum_{i=1}^n \delta_{X_i}$ where $\delta_x$ denotes the Dirac measure at $x$. A stochastic process $B=\big(\mspace{-1.5mu}B(t)\mspace{-1.5mu}\big)_{\mspace{-3mu}t\in T}$ is called Gaussian if for any finite subset $F$ of $T$, $(B(t))_{t \in F}$ is Gaussian. A version of $B$ is another stochastic process with the same finite dimensional distributions.


\section{Limit Distribution Results}
\label{sec: related metrics}

We derive limit distributions of smooth TV distance and $\Chi$-divergence between $P_n$ and $P$. These results rely substantially on CLTs in $L^p$ spaces, and for the reader's convenience, we summarize basic CLT results in $L^p$ spaces in Appendix \ref{APPEN:CLT in Lp}.


\subsection{Smooth Total Variation Distance}

Consider $\tv(P,Q):= \sup_{A\in\cB(\RR^d)}\big|P(A) - Q(A)\big|$ the TV distance between $P,Q\in\cP(\RR^d)$, and define its smooth version as $\gtv(P,Q):=\tv(P\ast\Gauss,Q\ast\Gauss)$. Let $B_P^{(\sigma)} = \big(B_{P}^{(\sigma)}(x)\big)_{x \in \R^{d}}$ denote a centered Gaussian process with covariance function
\begin{equation*}
\E\left[B_{P}^{(\sigma)}(x)B_{P}^{(\sigma)}(y)\right]=\Cov_{P} \big(\gauss(x-\cdot),\gauss(y-\cdot)\big),
\end{equation*}
for all $x,y\in\R^{d}$. The following theorem derives a limit distribution and moment bound for $\gtv(P_n,P)$ under a certain moment condition on~$P$. 

\begin{theorem}[Limit distribution for $\gtv$]
\label{thm: CLT TV}
If 
\begin{equation}
\label{eq: TV condition}
\int_{\R^{d}} \sqrt{\Var_{P}(\gauss(x-\cdot))}\dd x < \infty,
\end{equation}
then there exists a version of $B_{P}^{(\sigma)}$ that is an $L^{1}(\R^{d})$-valued random variable such that
\begin{subequations}
\begin{equation}
\sqrt{n}\gtv(P_n,P)\stackrel{d}{\rightarrow} \frac{1}{2} \int_{\R^{d}} \big| B_{P}^{(\sigma)}(x)\big|\dd x.     
\end{equation}
In addition, we have 
\begin{equation}
\sqrt{n}\E\Big[\gtv(P_n,P)\Big] \le \frac{1}{2} \int_{\R^{d}} \sqrt{\Var_{P}(\gauss(x-\cdot))}\dd x. 
\end{equation}
\end{subequations}
\end{theorem}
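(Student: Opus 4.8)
The plan is to reduce everything to a central limit theorem in $L^1(\R^d)$. Since $P_n\ast\Gauss$ and $P\ast\Gauss$ are absolutely continuous with densities $P_n\ast\gauss(x)=n^{-1}\sum_{i=1}^n\gauss(x-X_i)$ and $P\ast\gauss(x)=\E_P[\gauss(x-Y)]$, Scheff\'e's identity gives $\gtv(P_n,P)=\tfrac12\|P_n\ast\gauss-P\ast\gauss\|_{L^1(\R^d)}$. Set $Z_i:=\gauss(\cdot-X_i)-P\ast\gauss$ and $\bar Z_n:=n^{-1}\sum_{i=1}^nZ_i$, so that $\sqrt n\,\gtv(P_n,P)=\tfrac12\big\|\sqrt n\,\bar Z_n\big\|_{L^1(\R^d)}$. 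The $Z_i$ are i.i.d.\ $L^1(\R^d)$-valued random elements: Borel measurability follows because $y\mapsto\gauss(\cdot-y)$ is continuous from $\R^d$ into $L^1(\R^d)$ (translation continuity); $\|Z_i\|_{L^1}\le\|\gauss(\cdot-X_i)\|_{L^1}+\|P\ast\gauss\|_{L^1}=2$, so $\E\|Z_i\|_{L^1}<\infty$ and $Z_i$ is Bochner integrable; and $\E[Z_i]=0$ in $L^1(\R^d)$ by Fubini, since $\E[\gauss(x-X_i)]=P\ast\gauss(x)$ for every $x$.

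Next I would apply the CLT in $L^1(\R^d)$ (Appendix~\ref{APPEN:CLT in Lp}) to $Z_1$. Because $\E[Z_1(x)^2]=\Var_P(\gauss(x-\cdot))$, assumption \eqref{eq: TV condition} is precisely $\int_{\R^d}\big(\E[Z_1(x)^2]\big)^{1/2}\dd x<\infty$, which is the (necessary and sufficient) integrability condition for the $L^1$-CLT. Hence $\sqrt n\,\bar Z_n\stackrel{d}{\rightarrow}G$ in $L^1(\R^d)$, where $G$ is a centered Gaussian $L^1(\R^d)$-valued random variable with the same covariance as $Z_1$, i.e.\ $\E[\langle G,h\rangle\langle G,h'\rangle]=\iint\Cov_P\big(\gauss(x-\cdot),\gauss(y-\cdot)\big)h(x)h'(y)\dd x\dd y$ for bounded measurable $h,h'$. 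Matching these covariances identifies $G$ with a version of the process $B_P^{(\sigma)}$ that takes values in $L^1(\R^d)$ — the version asserted in the theorem.

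Claim (a) then follows from the continuous mapping theorem applied to the continuous (indeed $\tfrac12$-Lipschitz) map $f\mapsto\tfrac12\|f\|_{L^1(\R^d)}$ on $L^1(\R^d)$: $\sqrt n\,\gtv(P_n,P)=\tfrac12\|\sqrt n\,\bar Z_n\|_{L^1}\stackrel{d}{\rightarrow}\tfrac12\|G\|_{L^1}=\tfrac12\int_{\R^d}\big|B_P^{(\sigma)}(x)\big|\dd x$. For the moment bound (b), Tonelli followed by the elementary inequality $\E|W|\le(\E W^2)^{1/2}$ applied to $W=n^{-1}\sum_{i=1}^nZ_i(x)$ (which has mean zero and variance $n^{-1}\Var_P(\gauss(x-\cdot))$ by independence) gives
\[
\E[\gtv(P_n,P)]=\tfrac12\int_{\R^d}\E\Big|\tfrac1n{\textstyle\sum_{i=1}^n}Z_i(x)\Big|\dd x\le\frac{1}{2\sqrt n}\int_{\R^d}\sqrt{\Var_P(\gauss(x-\cdot))}\,\dd x,
\]
which is the stated bound after multiplying by $\sqrt n$.

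The crux is the $L^1$-CLT invocation: one must verify that \eqref{eq: TV condition} is exactly the hypothesis needed — it is the classical integrability criterion for the CLT in $L^1$ — and then correctly match the abstract Gaussian limit in $L^1(\R^d)$ with a version of $B_P^{(\sigma)}$, which requires some care since elements of $L^1$ are equivalence classes, so pointwise evaluation must be read through a jointly measurable version of the process. The remaining ingredients (Scheff\'e's identity, measurability and Bochner integrability of the $Z_i$, the continuous mapping theorem, and the second-moment computation) are routine.
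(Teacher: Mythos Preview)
Your proposal is correct and follows essentially the same route as the paper: write $\gtv(P_n,P)=\tfrac12\|P_n\ast\gauss-P\ast\gauss\|_{L^1}$, apply the $L^1$-CLT from Appendix~\ref{APPEN:CLT in Lp} (the tail condition being trivial since $\|Z_i\|_{L^1}\le2$), identify the Gaussian limit with a version of $B_P^{(\sigma)}$, invoke the continuous mapping theorem for part~(a), and use Tonelli plus $\E|W|\le(\E W^2)^{1/2}$ for part~(b). Your write-up is in fact more careful than the paper's about measurability and Bochner integrability of the $Z_i$, but the underlying argument is the same.
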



The following lemma derives a sufficient condition for Condition (\ref{eq: TV condition}) to hold. 

\begin{lemma}[Sufficient condition for \eqref{eq: TV condition}]
\label{lem: TV condition}
We have for $X \sim P$,
\begin{equation}
\begin{split}
&\int_{\R^{d}} \sqrt{\Var_{P}(\gauss(x-\cdot))}\dd x \\
&\quad  \le 8^{d/2} + \frac{2^{d/2+1}}{\sigma^{d}\Gamma(d/2)} \int_{0}^{\infty}t^{d-1}\sqrt{\Prob(\|X\| > t)} \dd t.
\end{split}
\label{eq: TV moment bound}
\end{equation}
Thus, if $P$ has finite $(2d + \epsilon)$ moments for some $\epsilon > 0$, i.e., $\E[\| X \|^{2d+\epsilon}] < \infty$, then Condition (\ref{eq: TV condition}) holds.  
\end{lemma}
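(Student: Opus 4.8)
The plan is to dominate the integrand pointwise by the square root of a convolution $P\ast\gauss$, and then split $\R^d$ according to whether the sample point lies near the origin. First I would observe, for each fixed $x\in\R^{d}$, that $\Var_P(\gauss(x-\cdot)) \le \E_P[\gauss(x-X)^{2}] \le (2\pi\sigma^{2})^{-d/2}\,\E_P[\gauss(x-X)] = (2\pi\sigma^{2})^{-d/2}(P\ast\gauss)(x)$, where the middle inequality is the elementary pointwise bound $\gauss(z)^{2}\le\gauss(0)\gauss(z)$, valid because $e^{-\|z\|^{2}/\sigma^{2}}\le e^{-\|z\|^{2}/(2\sigma^{2})}$ and $\gauss(0)=(2\pi\sigma^{2})^{-d/2}$. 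Taking square roots and integrating in $x$ reduces the claim to bounding $(2\pi\sigma^{2})^{-d/4}\int_{\R^{d}}\sqrt{(P\ast\gauss)(x)}\,\dd x$.

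Next I would decompose $(P\ast\gauss)(x) = \E_P[\gauss(x-X)\mathds{1}_{\{\|X\|\le\|x\|/2\}}] + \E_P[\gauss(x-X)\mathds{1}_{\{\|X\|>\|x\|/2\}}]$ and apply $\sqrt{a+b}\le\sqrt a+\sqrt b$. On the first event one has $\|x-X\|\ge\|x\|-\|X\|\ge\|x\|/2$, hence $\gauss(x-X)\le(2\pi\sigma^{2})^{-d/2}e^{-\|x\|^{2}/(8\sigma^{2})}$; after multiplying by the prefactor $(2\pi\sigma^{2})^{-d/4}$ this term becomes a pure Gaussian integral over $\R^{d}$ that evaluates to exactly $8^{d/2}$. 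On the second event I would bound $\gauss(x-X)$ crudely by its maximum $(2\pi\sigma^{2})^{-d/2}$, so the term is controlled by $(2\pi\sigma^{2})^{-d/2}\int_{\R^{d}}\sqrt{\Prob(\|X\|>\|x\|/2)}\,\dd x$; passing to polar coordinates (which supplies the surface area $2\pi^{d/2}/\Gamma(d/2)$ of the unit sphere) and rescaling $x\mapsto 2x$ (which supplies a factor $2^{d}$) turns this into $\tfrac{2^{d/2+1}}{\sigma^{d}\Gamma(d/2)}\int_{0}^{\infty}t^{d-1}\sqrt{\Prob(\|X\|>t)}\,\dd t$, establishing \eqref{eq: TV moment bound}. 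For the sufficiency claim, Markov's inequality gives $\Prob(\|X\|>t)\le t^{-(2d+\epsilon)}\E[\|X\|^{2d+\epsilon}]$, so $t^{d-1}\sqrt{\Prob(\|X\|>t)}\lesssim t^{-1-\epsilon/2}$ for $t\ge1$, while $t^{d-1}\sqrt{\Prob(\|X\|>t)}\le t^{d-1}$ for $t\le1$; both are integrable on their respective ranges, so the right-hand side of \eqref{eq: TV moment bound} is finite and \eqref{eq: TV condition} holds.

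The only delicate step is the far-region estimate: replacing $\gauss(x-X)$ by its supremum is wasteful, and it is precisely this that produces the $\sqrt{\Prob(\|X\|>\|x\|/2)}$ factor and hence the $(2d+\epsilon)$-moment requirement. A sharper argument — for instance applying Cauchy--Schwarz against a polynomial weight $(1+\|x\|)^{d+\epsilon}$ before integrating — would relax the hypothesis to roughly $d+\epsilon$ finite moments; but the crude bound already yields the clean closed form \eqref{eq: TV moment bound}, which comfortably suffices for Theorem~\ref{thm: CLT TV}, so I would not pursue that refinement here.
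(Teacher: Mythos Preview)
Your proposal is correct and follows essentially the same route as the paper: bound the variance by the second moment, split the expectation according to $\|X\|\le\|x\|/2$ versus $\|X\|>\|x\|/2$, control the near region by a pure Gaussian in $x$ and the far region by the tail probability $\Prob(\|X\|>\|x\|/2)$, then integrate in polar coordinates and finish with Markov's inequality. The only cosmetic difference is that you insert the extra pointwise bound $\gauss^{2}\le\gauss(0)\gauss$ to pass through $P\ast\gauss$, whereas the paper works directly with the exact expression $\E[\gauss^{2}(x-X)]=(2\pi\sigma^{2})^{-d}\int e^{-\|x-y\|^{2}/\sigma^{2}}\dd P(y)$; both lead to the same constants in \eqref{eq: TV moment bound}.
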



Proposition 2 in \cite{Goldfeld2019convergence} derives a moment bound on $\gtv(P_nP)$ assuming  sub-Gaussian $P$. This condition is substantially relaxed in Theorem \ref{thm: CLT TV} above (in addition to deriving a limit distribution). In fact, Condition \eqref{eq: TV condition} is sharp for the first moment of $\gtv(P_n,P)$ to be of order $n^{-1/2}$. The following proposition shows that if \eqref{eq: TV condition} does not hold, then $\gtv(P_n,P)$ has a rate strictly slower than $n^{-1/2}$. We say that a sequence of random variables $Y_n$ is stochastically bounded (or tight) if for any $\epsilon > 0$, there exists a constant $M = M_{\epsilon}$ such that $\PP (|Y_{n}| > M) \le \epsilon$ for all $n$.

\begin{proposition}[Sharpness of Condition \eqref{eq: TV condition}]
\label{cor: TV condition}
If $\sqrt{n}\gtv(P_n,P)$ is stochastically bounded, then Condition \eqref{eq: TV condition} holds. 
\end{proposition}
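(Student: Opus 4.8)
The plan is to argue directly. First I would record the elementary identity $\sqrt{n}\gtv(P_n,P)=\tfrac12\|G_n\|_{L^1(\R^d)}$, valid because $P_n\ast\Gauss$ and $P\ast\Gauss$ have densities $P_n\ast\gauss$ and $P\ast\gauss$, where $G_n(x):=\sqrt{n}\big(P_n\ast\gauss(x)-P\ast\gauss(x)\big)=n^{-1/2}\sum_{i=1}^n\xi_i(x)$ and $\xi_i(x):=\gauss(x-X_i)-P\ast\gauss(x)$. Then, for each $R>0$, I would localize to the ball $B_R:=\{x\in\R^d:\|x\|\le R\}$: the centered summands $\xi_i|_{B_R}$ are uniformly bounded (by $2\|\gauss\|_\infty$) $L^1(B_R)$-valued random elements, and since $B_R$ has finite Lebesgue measure they satisfy the hypotheses of the CLT in $L^1(B_R)$ (Appendix \ref{APPEN:CLT in Lp}), which yields
\[
\|G_n\|_{L^1(B_R)}\ \stackrel{d}{\rightarrow}\ N_R:=\big\|B_P^{(\sigma)}\big\|_{L^1(B_R)}\quad(n\to\infty),
\]
with $B_P^{(\sigma)}|_{B_R}$ the centered Gaussian random element of $L^1(B_R)$ carrying the covariance that defines $B_P^{(\sigma)}$.

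Assuming $\sqrt{n}\gtv(P_n,P)$ is stochastically bounded, I would fix a universal constant $c_0\in(0,1)$ (pinned down below) and choose $\epsilon\in(0,c_0)$; tightness supplies $M=M_\epsilon<\infty$ with $\Prob\big(\|G_n\|_{L^1(\R^d)}>M\big)\le\epsilon$ for all $n$, and since $\|G_n\|_{L^1(B_R)}\le\|G_n\|_{L^1(\R^d)}$ this bound holds for $\|G_n\|_{L^1(B_R)}$ too, uniformly in both $n$ and $R$. Letting $n\to\infty$ and using the portmanteau theorem on the open set $(M,\infty)$ gives $\Prob(N_R>M)\le\epsilon$ for every $R$.

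The crux is then to exploit the Gaussian structure of $N_R$. For any centered Gaussian random element $Z$ of a separable Banach space $E$, all moments of $\|Z\|$ are comparable (Fernique); concretely, the Gaussian Poincar\'e inequality gives $\Var(\|Z\|)\le\sigma_w^2$ with $\sigma_w^2:=\sup_{\|f\|_{E^*}\le1}\E f(Z)^2$, and $\E\|Z\|\ge\E|f(Z)|=\sqrt{2/\pi}\,(\E f(Z)^2)^{1/2}$ for each unit functional $f$ gives $\sigma_w\le\sqrt{\pi/2}\,\E\|Z\|$, whence $\E\|Z\|^2\le(1+\pi/2)(\E\|Z\|)^2$; the Paley--Zygmund inequality then yields $\Prob\big(\|Z\|>\tfrac12\E\|Z\|\big)\ge\frac{1}{4(1+\pi/2)}=:c_0>0$. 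Applying this with $Z=B_P^{(\sigma)}|_{B_R}$: were $\tfrac12\E N_R>M$, then $\{N_R>\tfrac12\E N_R\}\subseteq\{N_R>M\}$ would force $c_0\le\epsilon$, a contradiction; hence $\E N_R\le2M$ for every $R$. By Tonelli and $B_P^{(\sigma)}(x)\sim\cN\big(0,\Var_P(\gauss(x-\cdot))\big)$,
\[
\E N_R=\int_{B_R}\E\big|B_P^{(\sigma)}(x)\big|\dd x=\sqrt{\tfrac{2}{\pi}}\int_{B_R}\sqrt{\Var_P(\gauss(x-\cdot))}\dd x\le2M ,
\]
and letting $R\to\infty$ (monotone convergence on this deterministic integral) delivers exactly \eqref{eq: TV condition}.

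The main obstacle is the gap between stochastic boundedness and control of the mean: a tight sequence $\|G_n\|_{L^1(\R^d)}$ may well have $\E\|G_n\|_{L^1(\R^d)}\to\infty$. Localizing to $B_R$ sidesteps this --- there the $L^1$-CLT holds unconditionally and its limit is a bona fide Gaussian random element, on which a Fernique/Paley--Zygmund estimate with an $R$-independent constant $c_0$ applies; since the tightness threshold $M$ is also $R$-free, the limit $R\to\infty$ passes through. The technical points needing care are the exact hypotheses of the $L^1(B_R)$-CLT and the existence of a jointly measurable version of the limiting Gaussian process, which legitimizes the identity $\E N_R=\int_{B_R}\E|B_P^{(\sigma)}(x)|\dd x$.
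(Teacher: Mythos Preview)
Your argument is correct, and it takes a genuinely different route from the paper's proof. The paper proceeds in two steps: first it upgrades stochastic boundedness of $\sqrt{n}\gtv(P_n,P)$ to boundedness of its first moment via Hoffmann--J{\o}rgensen's inequality combined with Montgomery--Smith's maximal inequality; then it lower--bounds $\sqrt{n}\,\E[\gtv(P_n,P)]$ by the pointwise CLT, a truncation $\wedge k$, and Fatou's lemma, yielding in fact the quantitative bound $\liminf_n \sqrt{n}\,\E[\gtv(P_n,P)]\ge (2\pi)^{-1/2}\int_{\R^d}\sqrt{\Var_P(\gauss(x-\cdot))}\dd x$. You instead localize to balls $B_R$ so that the $L^1(B_R)$--CLT applies unconditionally, pass the uniform tail bound to the Gaussian limit $N_R$ by Portmanteau, and then exploit the Gaussian structure of $N_R$ (Poincar\'e variance bound, weak--variance comparison $\sigma_w\le\sqrt{\pi/2}\,\E\|Z\|$, and Paley--Zygmund) to obtain $\E N_R\le 2M$ with an $R$--free constant, after which monotone convergence finishes. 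Your approach is more self--contained in that it avoids the Banach--space maximal inequalities altogether, trading them for standard Gaussian concentration; the price is that you do not recover the explicit $\liminf$ lower bound on $\sqrt{n}\,\E[\gtv(P_n,P)]$ that the paper's Step~2 gives as a byproduct (though finiteness of \eqref{eq: TV condition}, which is all the proposition claims, follows cleanly). One cosmetic point: the Paley--Zygmund step as written tacitly needs $\E N_R>0$; when $\E N_R=0$ the conclusion $\E N_R\le 2M$ is trivial, so this is harmless.
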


The proof indeed shows that 
\begin{equation*}
\liminf_{n} \sqrt{n} \E\Big[\gtv(P_n,P)\Big] \ge \frac{1}{\sqrt{2\pi}} \int_{\R^{d}} \sqrt{\Var_{P}(\gauss(x-\cdot))}\dd x
\end{equation*}
\textit{without} assuming Condition \eqref{eq: TV condition}. The stochastic boundedness of $\sqrt{n}\gtv(P_n,P)$ implies the boundedness of the first moment by Hoffmann-J{\o}rgensen's inequality, so the conclusion of the corollary follows. See Section \ref{SUBSEC: TV condition proof} for details.

Finally, we state a concentration inequality for $\gtv(P_n,P)$. 

\begin{corollary}[Concentration inequality for $\gtv$]
\label{cor: concentration TV}
Under Condition \eqref{eq: TV condition}, we have 
\begin{equation}
\PP\bigg (\gtv(P_n,P) \ge \E\Big[\gtv(P_n,P)\Big] + t \bigg) \le e^{-nt^{2}/2},\quad\forall t > 0.
\end{equation}
\end{corollary}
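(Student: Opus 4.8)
The plan is to recognize $\gtv(P_n,P)$ as a function of the i.i.d.\ sample $(X_1,\dots,X_n)$ that changes by a controlled amount whenever a single coordinate is perturbed, and then to invoke McDiarmid's bounded differences inequality. Write $p := P\ast\gauss$ and, for $x_1,\dots,x_n\in\R^d$, set
\[
h(x_1,\dots,x_n) := \tv\!\Big(\big(n^{-1}\textstyle\sum_{i=1}^n\delta_{x_i}\big)\ast\Gauss,\,P\ast\Gauss\Big) = \tfrac12\int_{\R^d}\Big|\tfrac1n\textstyle\sum_{i=1}^n\gauss(x-x_i) - p(x)\Big|\dd x,
\]
using that the TV distance between two measures with densities equals one half the $L^1$ distance of the densities. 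Then $\gtv(P_n,P) = h(X_1,\dots,X_n)$, and $h$ is measurable by its integral representation. Moreover, by the moment bound in Theorem~\ref{thm: CLT TV}, Condition~\eqref{eq: TV condition} guarantees $\E[\gtv(P_n,P)]<\infty$, so the centering in the claimed inequality is meaningful.

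Next I would establish the bounded differences property. Fix $i$ and let $x^{(i)}$ denote the argument obtained from $x$ by replacing $x_i$ with some $x_i'$ and keeping the other coordinates fixed. Only the $i$th summand of the empirical density changes, so by the triangle inequality for TV (equivalently, in $L^1(\R^d)$),
\[
\big|h(x)-h(x^{(i)})\big| \le \tfrac1n\,\tv\big(\Gauss\ast\delta_{x_i},\,\Gauss\ast\delta_{x_i'}\big) = \tfrac{1}{2n}\,\big\|\gauss(\cdot - x_i) - \gauss(\cdot - x_i')\big\|_{L^1(\R^d)} \le \tfrac2n,
\]
since $\|\gauss(\cdot - x_i)\|_{L^1} = \|\gauss(\cdot - x_i')\|_{L^1} = 1$ (this crude bound $c_i = 2/n$ suffices; the sharper $1/n$ also holds). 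Hence $h$ satisfies the bounded differences condition with constants $c_i = 2/n$. Applying McDiarmid's inequality to $h(X_1,\dots,X_n)$ with the $X_i$ independent then yields $\PP\big(\gtv(P_n,P) \ge \E[\gtv(P_n,P)] + t\big)\le \exp\big(-2t^2/\sum_{i=1}^n c_i^2\big) = \exp(-nt^2/2)$ for all $t>0$, which is exactly the assertion. (Using the sharp constants $c_i=1/n$ would even give $e^{-2nt^2}$, so the stated bound is in any case valid.)

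There is no real obstacle here: the argument is a direct application of the bounded differences inequality, and the only points requiring mild care are (i) confirming $\E[\gtv(P_n,P)]<\infty$, which comes from Condition~\eqref{eq: TV condition} via Theorem~\ref{thm: CLT TV}, so the statement is well posed; and (ii) pinning down the per-coordinate constant, which follows from the triangle inequality for TV together with the fact that each data point carries empirical mass $1/n$. The lower-tail estimate, and hence a two-sided bound, would follow identically by applying McDiarmid to $-h$.
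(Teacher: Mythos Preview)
Your proposal is correct and follows exactly the approach the paper indicates: apply McDiarmid's bounded differences inequality using that $\|\gauss(\cdot-X_i)\|_1=1$. One small arithmetic slip: your displayed chain actually yields $|h(x)-h(x^{(i)})|\le \tfrac{1}{2n}\cdot 2 = \tfrac{1}{n}$ (not $2/n$), but since $1/n\le 2/n$ the stated bound $e^{-nt^2/2}$ follows either way, and as you note the sharper constant gives $e^{-2nt^2}$.
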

This result follows from a simple application of McDiarmid's inequality (cf. \cite{Mc1989} or \cite{GiNi2016} Theorem 3.3.14), together with the fact that $\| \gauss(\cdot-X_i) \|_1 = \int_{\R^{d}} \varphi_{\sigma}(x-X_{i}) \dd x = 1$ (cf. the proof of Theorem \ref{thm: CLT TV}). We omit the details for brevity.


\subsection{Smooth $\Chi$-Divergence}

The $\Chi$-divergence between $P$ and $Q$ is $\chi^{2} (P \| Q) := \int \left(\frac{\dd P}{\dd Q} - 1\right)^{2}\dd Q$.
We have the following limit distribution result for its smooth empirical version $\gchi(P\|Q):=\Chi(P\ast\Gauss\|Q\ast\Gauss)$.
Recall that $P*\gauss(x) := \int_{\R^{d}} \gauss(x-y)\dd P(y)$.

\begin{theorem}[Limit distribution for $\gchi$]
\label{thm: CLT chi2}
If 
\begin{equation}
\label{eq: chi2 condition}
\int_{\R^{d}} \frac{\Var_{P}(\gauss(x-\cdot))}{P*\varphi (x)}\dd x < \infty,
\end{equation}
then there exists a version of $B_{P}^{(\sigma)}$ such that $B_{P}^{(\sigma)}/\sqrt{P*\gauss}$ is an $L^{2}(\R^{d})$-valued random variable and 
\begin{subequations}
\begin{equation}
n \gchi(P_n\| P) \stackrel{d}{\rightarrow} \int_{\R^{d}}  \frac{\big| B_{P}^{(\sigma)}(x)\big|^2}{P*\gauss (x)}\dd x.
\end{equation}
In addition, we have 
\begin{equation}
n \E\big[\gchi(P_n\|P)\big] = \int_{\R^{d}} \frac{\Var_{P}(\gauss(x-\cdot))}{P*\gauss (x)}\dd x. 
\end{equation}
\end{subequations}
\end{theorem}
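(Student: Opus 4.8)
The plan is to reduce the claim to a central limit theorem in the Hilbert space $L^2(\R^d)$. Gaussian smoothing makes both $P_n\ast\Gauss$ and $P\ast\Gauss$ absolutely continuous with densities $P_n\ast\gauss$ and $P\ast\gauss$, and $P\ast\gauss(x)=\int\gauss(x-y)\dd P(y)>0$ for every $x$ since $\gauss>0$. Hence
\[
n\,\gchi(P_n\|P)=n\int_{\R^d}\frac{\big(P_n\ast\gauss(x)-P\ast\gauss(x)\big)^2}{P\ast\gauss(x)}\dd x=\Big\|\frac{1}{\sqrt n}\sum_{i=1}^n Y_i\Big\|_{L^2(\R^d)}^2,
\]
where $Y_i$ is the $L^2(\R^d)$-valued map $Y_i(x):=\big(\gauss(x-X_i)-P\ast\gauss(x)\big)/\sqrt{P\ast\gauss(x)}$. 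The first step is to verify that $Y_i$ is a genuine centered random element of $L^2(\R^d)$ ($Y_i$ is Bochner measurable since $L^2(\R^d)$ is separable and $(x,\omega)\mapsto Y_i(x)$ is jointly measurable) with finite second moment: by Tonelli,
\[
\E\big\|Y_1\big\|_{L^2(\R^d)}^2=\int_{\R^d}\frac{\E\big[(\gauss(x-X)-P\ast\gauss(x))^2\big]}{P\ast\gauss(x)}\dd x=\int_{\R^d}\frac{\Var_P(\gauss(x-\cdot))}{P\ast\gauss(x)}\dd x,
\]
which is finite precisely by Condition~\eqref{eq: chi2 condition}; moreover $\E Y_1=0$ in $L^2(\R^d)$ because $\E[Y_1(x)]=0$ for every $x$ (test against $h\in L^2(\R^d)$ and swap $\E$ and $\int$ via Cauchy--Schwarz and the finite second moment).

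Since $\E\|Y_1\|_{L^2}^2<\infty$ and $\E Y_1=0$, the CLT in separable Hilbert spaces (cf.\ Appendix~\ref{APPEN:CLT in Lp}) applies with no further moment conditions and yields $n^{-1/2}\sum_{i=1}^n Y_i\stackrel{d}{\to}G$ in $L^2(\R^d)$, with $G$ the centered Gaussian element whose covariance operator coincides with that of $Y_1$. A direct computation gives $\E[Y_1(x)Y_1(y)]=\Cov_P(\gauss(x-\cdot),\gauss(y-\cdot))/\sqrt{P\ast\gauss(x)\,P\ast\gauss(y)}$, so $\sqrt{P\ast\gauss(\cdot)}\,G(\cdot)$ carries the covariance function of $B_P^{(\sigma)}$. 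To turn the abstract $L^2$ limit into the statement ``some version of $B_P^{(\sigma)}$ satisfies $B_P^{(\sigma)}/\sqrt{P\ast\gauss}\in L^2(\R^d)$'', I would pick a jointly measurable representative $(x,\omega)\mapsto G(x,\omega)$ of $G$ and define $B_P^{(\sigma)}(x):=\sqrt{P\ast\gauss(x)}\,G(x)$; then $B_P^{(\sigma)}$ is a version of the process in the theorem and, by construction, $B_P^{(\sigma)}/\sqrt{P\ast\gauss}=G$ lies in $L^2(\R^d)$ almost surely.

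The distributional convergence then follows from the continuous mapping theorem applied to the continuous functional $h\mapsto\|h\|_{L^2(\R^d)}^2$: $n\,\gchi(P_n\|P)=\|n^{-1/2}\sum_i Y_i\|_{L^2}^2\stackrel{d}{\to}\|G\|_{L^2}^2=\int_{\R^d}|B_P^{(\sigma)}(x)|^2/P\ast\gauss(x)\dd x$. For the moment identity, expand $\E\|n^{-1/2}\sum_i Y_i\|_{L^2}^2=n^{-1}\sum_{i,j}\E\langle Y_i,Y_j\rangle=\E\|Y_1\|_{L^2}^2$ (independence together with $\E Y_1=0$ annihilates the cross terms), and Tonelli rewrites this as $\int_{\R^d}\Var_P(\gauss(x-\cdot))/P\ast\gauss(x)\dd x$; note this is an exact equality, as claimed. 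The only genuinely delicate point is the first step --- confirming that $Y_i$ is a finite-second-moment $L^2(\R^d)$-valued random variable under Condition~\eqref{eq: chi2 condition} and handling the measurable-version bookkeeping needed to phrase the limit through the process $B_P^{(\sigma)}$; once that is settled, the Hilbert-space CLT delivers everything else without any of the extra integrability/tightness work required for the $L^1$ argument behind Theorem~\ref{thm: CLT TV}.
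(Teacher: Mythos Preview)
Your argument is correct and is essentially the paper's proof: the paper applies the CLT in $L^{2}(P*\Gauss)$ to $Z_i=\gauss(\cdot-X_i)/(P*\gauss)-1$, which is the same object as your $Y_i$ under the isometry $L^{2}(P*\Gauss)\to L^{2}(\R^d)$, $f\mapsto f\sqrt{P*\gauss}$, and then invokes the continuous mapping theorem and the Hilbert-space expansion exactly as you do. Your treatment of the version of $B_P^{(\sigma)}$ is in fact slightly more explicit than the paper's.
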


Condition \eqref{eq: chi2 condition} holds if $P$ is $\beta$-sub-Gaussian with $\beta > \sigma/\sqrt{2}$. 

\begin{definition}[Sub-Gaussian distribution]\label{DEF:SG}
We call $P\in\cP(\R^d)$ $\beta$-sub-Gaussian, for $\beta >0$, if $X \sim P$ satisfies 
\begin{equation*}
\E\left [ \exp \big ( \alpha \cdot (X-\E[X]) \big) \right ]\le e^{\beta^{2}\| \alpha \|^{2}/2}, \quad \forall \alpha \in \R^{d}. 
\end{equation*}
\end{definition}

Let $Z\sim\cN (0,\mathrm{I}_{d})$. By Definition \ref{DEF:SG}, for any $\beta$-sub-Gaussian $X$ with mean zero and $0 \le \eta < 1/(2\beta^{2})$, we have
\begin{align*}
\E\big [e^{\eta \| X \|^{2}} \big]& = \E\big[ \E\big [e^{\sqrt{2}\eta X \cdot Z} \big| X\big] \big] =\E\big[ \E\big[e^{\sqrt{2}\eta X \cdot Z} \big| Z\big]\big] \\
&\le \E\big [ e^{\beta^{2} \eta \| Z \|^{2}}\big] = (1-2\beta^{2} \eta)^{-d/2},\numberthis\label{EQ: subgaussian}
\end{align*}
where the last equality is because $\| Z \|^{2}$ has the $\chi^{2}$-distribution with $d$-degrees of freedom (cf. \cite[Remark 2.3]{hsu2012tail}).

\begin{lemma}[Sufficient condition for \eqref{eq: chi2 condition}]\label{LEMMA:chi_condition}
\label{lem: chi2 condition}
If $P$ is $\beta$-sub-Gaussian for some $\beta < \sigma/\sqrt{2}$, then Condition \eqref{eq: chi2 condition} holds. 
\end{lemma}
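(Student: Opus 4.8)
The plan is to first dominate the numerator in \eqref{eq: chi2 condition} by a second moment, which reduces the claim to integrating a ratio of two Gaussian‑convolved densities. Since $\Var_P(\gauss(x-\cdot)) \le \E_P\big[\gauss(x-X)^2\big]$ and a direct computation gives $\gauss(z)^2 = (4\pi\sigma^2)^{-d/2}\, g_{\sigma/\sqrt 2}(z)$, where $g_{\sigma/\sqrt 2}$ denotes the density of $\cN(0,(\sigma^2/2)\mathrm{I}_d)$, we have $\E_P\big[\gauss(x-X)^2\big] = (4\pi\sigma^2)^{-d/2}\,(P\ast g_{\sigma/\sqrt 2})(x)$. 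Hence it suffices to show
\[
\int_{\R^d}\frac{(P\ast g_{\sigma/\sqrt 2})(x)}{(P\ast\gauss)(x)}\dd x < \infty .
\]
Both this integral and the $\beta$-sub-Gaussianity of $P$ are invariant under translating $P$, so I may assume $\E[X]=0$; note also $\E\big[\|X\|^2\big]<\infty$ by sub‑Gaussianity, and $(P\ast\gauss)(x)>0$ everywhere, so there is no issue with the ratio being ill‑defined.

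Next I would produce matching Gaussian‑type bounds on the two densities. For the denominator, Jensen's inequality applied to the convex map $u\mapsto e^{-u/(2\sigma^2)}$ gives $(P\ast\gauss)(x) = (2\pi\sigma^2)^{-d/2}\,\E\big[e^{-\|x-X\|^2/(2\sigma^2)}\big] \ge (2\pi\sigma^2)^{-d/2}\,e^{-\E[\|x-X\|^2]/(2\sigma^2)}$, and since $\E[X]=0$ we have $\E\big[\|x-X\|^2\big]=\|x\|^2+\E\big[\|X\|^2\big]$, so $(P\ast\gauss)(x)\ge c_2\,e^{-\|x\|^2/(2\sigma^2)}$ for some $c_2>0$. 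For the numerator, fix $\theta\in(0,1)$ and apply Young's inequality $2\|x\|\,\|X\|\le\theta\|x\|^2+\theta^{-1}\|X\|^2$ to get $\|x-X\|^2\ge(1-\theta)\|x\|^2-(\theta^{-1}-1)\|X\|^2$, whence $(P\ast g_{\sigma/\sqrt 2})(x) = (\pi\sigma^2)^{-d/2}\,\E\big[e^{-\|x-X\|^2/\sigma^2}\big] \le (\pi\sigma^2)^{-d/2}\,e^{-(1-\theta)\|x\|^2/\sigma^2}\,\E\big[e^{(\theta^{-1}-1)\|X\|^2/\sigma^2}\big]$. By \eqref{EQ: subgaussian}, the remaining expectation is finite precisely when $(\theta^{-1}-1)/\sigma^2<1/(2\beta^2)$, i.e.\ when $\theta>\tfrac{2\beta^2}{2\beta^2+\sigma^2}$.

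Combining the two bounds, $(P\ast g_{\sigma/\sqrt 2})(x)/(P\ast\gauss)(x)\le c\,e^{-(\frac12-\theta)\|x\|^2/\sigma^2}$, which is Lebesgue‑integrable over $\R^d$ exactly when $\theta<\tfrac12$. Thus the argument closes as soon as the admissible interval $\big(\tfrac{2\beta^2}{2\beta^2+\sigma^2},\tfrac12\big)$ for $\theta$ is nonempty, and $\tfrac{2\beta^2}{2\beta^2+\sigma^2}<\tfrac12 \iff 2\beta^2<\sigma^2 \iff \beta<\sigma/\sqrt{2}$ — exactly the hypothesis, so one picks any such $\theta$ and concludes. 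The only real obstacle is this tail‑rate bookkeeping: bounding the numerator crudely (e.g.\ $\gauss(x-y)\le\gauss(0)$) yields a ratio that is bounded but not integrable, so one must exploit that convolving with the narrower Gaussian $g_{\sigma/\sqrt 2}$ produces genuinely faster Gaussian decay than $P\ast\gauss$, and the threshold $\beta<\sigma/\sqrt{2}$ is precisely what keeps the sub‑Gaussian tail of $P$ from eroding that gain.
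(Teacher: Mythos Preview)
Your proof is correct and follows the same overall strategy as the paper: bound the variance by the second moment, produce Gaussian-type upper and lower bounds on the two convolved densities via a Young-type inequality with a free parameter, control the resulting exponential moment of $\|X\|^2$ using \eqref{EQ: subgaussian}, and then optimise the parameter to land on the threshold $\beta<\sigma/\sqrt{2}$.

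The one genuine difference is where the slack parameter lives. The paper places its parameter $\eta$ in the \emph{denominator} bound, using $\|x-y\|^2\le(1+\eta)\|x\|^2+(1+\eta^{-1})\|y\|^2$ to get $P*\gauss(x)\gtrsim e^{-(1+\eta)\|x\|^2/(2\sigma^2)}$, then swaps the order of integration and evaluates the $x$-integral as a Gaussian moment generating function before integrating in $y$. You instead obtain the sharp lower bound $P*\gauss(x)\ge c_2\,e^{-\|x\|^2/(2\sigma^2)}$ directly from Jensen's inequality (no parameter needed) and put the slack parameter $\theta$ in the \emph{numerator} bound on $P*g_{\sigma/\sqrt{2}}$. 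Your route is arguably cleaner, since Jensen gives the best possible Gaussian lower bound on $P*\gauss$ in one line and avoids the Fubini/MGF computation; the paper's route, on the other hand, yields a fully explicit closed-form bound on the integral in \eqref{eq: chi2 condition} in terms of $\eta$ and $\beta$. Both collapse to the same threshold.
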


The proof also derives an explicit bound on the integral in~\eqref{eq: chi2 condition}.
Lemma \ref{LEMMA:chi_condition} improves upon \cite[Proposition 3]{Goldfeld2019convergence} that shows that $\E\big[\gchi(P_n\|P)\big] =  O(n^{-1})$ for $\beta$-sub-Gaussian $P$ with $\beta < \sigma/2$. Proposition 4 in \cite{Goldfeld2019convergence} shows that if $\beta > \sqrt{2}\sigma$, then $\E\big[\gchi(P_n\|P)\big]$ need not be finite, so in general a sub-Gaussian condition on $P$ is necessary to control $\E\big[\gchi(P_n\|P)\big]$.

Finally, we point out that if $P = \mathcal{N}_{\beta}$, then \eqref{eq: chi2 condition} holds for any $\beta > 0$. More generally, the following holds.

\begin{lemma}\label{lem: gaussian}
Suppose that $P$ is $\beta$-sub-Gaussian for some $\beta > 0$, and $X - \E[X]$, for $X \sim P$, has a Lebesgue density bounded from below by $c e^{-\| x \|^{2}/(2\gamma)}$ for some positive constants $c,\gamma$. Then, Condition \eqref{eq: chi2 condition} holds if $\beta < \sqrt{\frac{\sigma^{2}+2\gamma}{2}}$. In particular, Condition \eqref{eq: chi2 condition} holds if $P$ is Gaussian with covariance matrix $\Sigma$ and $\lambda_{\max} < \lambda_{\min}+\sigma^{2}/2$, where $\lambda_{\max}$ and $\lambda_{\min}$ are the maximum and minimum eigenvalues of $\Sigma$, respectively. 
\end{lemma}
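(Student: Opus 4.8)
The plan is to control the integral in \eqref{eq: chi2 condition} by bounding the numerator $\Var_P(\gauss(x-\cdot))$ from above and the denominator $P*\gauss(x)$ from below, using the sub-Gaussian hypothesis for the former and the density lower bound for the latter. Without loss of generality I would assume $\E[X]=0$ (the integral is translation-invariant in the obvious sense). For the numerator, note $\Var_P(\gauss(x-\cdot)) \le \E_P[\gauss(x-X)^2]$, and since $\gauss(x-y)^2 = (2\pi\sigma^2)^{-d/2}\,2^{-d/2}\,\gausss(x-y)$ where $\gausss = \varphi_{\sigma/\sqrt2}$, one has $\E_P[\gauss(x-X)^2] = (4\pi\sigma^2)^{-d/2}\,(P*\gausss)(x)$. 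Then I would bound $(P*\gausss)(x) \le c' e^{-\|x\|^2/(2(\beta'^2 + \sigma^2/2))}$ for an appropriate constant, using the fact (analogous to \eqref{EQ: subgaussian}) that convolving a $\beta$-sub-Gaussian law with $\mathcal N(0,(\sigma^2/2)\mathrm I_d)$ yields a density with Gaussian tail decay governed by $\beta^2 + \sigma^2/2$; more carefully, $(P*\gausss)(x) = \E[\gausss(x-X)] \le (\pi\sigma^2)^{-d/2} e^{-\|x\|^2/(\sigma^2+2\beta'^2)}$ up to constants, via a sub-Gaussian moment-generating-function argument on the shifted exponent.

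For the denominator, the hypothesis gives a Lebesgue density $p$ for $X$ with $p(y) \ge c\, e^{-\|y\|^2/(2\gamma)}$. Convolving with $\gauss$, I would lower-bound
\[
P*\gauss(x) = \int_{\R^d} \gauss(x-y)\, p(y)\dd y \ge c \int_{\R^d} \gauss(x-y)\, e^{-\|y\|^2/(2\gamma)}\dd y,
\]
and the right-hand side is (up to an explicit positive constant) the density at $x$ of $\mathcal N(0,(\sigma^2+\gamma)\mathrm I_d)$, hence is $\ge c'' e^{-\|x\|^2/(2(\sigma^2+\gamma))}$. Putting the two bounds together, the integrand in \eqref{eq: chi2 condition} is dominated by a constant times
\[
e^{-\|x\|^2/(\sigma^2+2\beta'^2)} \cdot e^{\|x\|^2/(2(\sigma^2+\gamma))} = \exp\!\left(-\|x\|^2\left[\tfrac{1}{\sigma^2+2\beta^2} - \tfrac{1}{2(\sigma^2+\gamma)}\right]\right),
\]
which is integrable precisely when the bracketed coefficient is strictly positive, i.e. $2(\sigma^2+\gamma) > \sigma^2 + 2\beta^2$, equivalently $\beta^2 < (\sigma^2 + 2\gamma)/2$, which is the stated condition $\beta < \sqrt{(\sigma^2+2\gamma)/2}$.

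For the Gaussian special case, if $P = \mathcal N(\mu,\Sigma)$ then $P$ is $\sqrt{\lambda_{\max}}$-sub-Gaussian, and after centering the density of $X-\E[X]$ is bounded below by $c\,e^{-\|x\|^2/(2\lambda_{\min})}$ (since $x^\top\Sigma^{-1}x \le \|x\|^2/\lambda_{\min}$), so we may take $\beta = \sqrt{\lambda_{\max}}$ and $\gamma = \lambda_{\min}$; the condition $\beta^2 < (\sigma^2+2\gamma)/2$ becomes $\lambda_{\max} < \lambda_{\min} + \sigma^2/2$, as claimed. The main obstacle is getting the two Gaussian-tail exponents aligned with sharp enough constants so that the exponents combine to give exactly the threshold $(\sigma^2+2\gamma)/2$ rather than something lossier; this requires care in the sub-Gaussian bound for $P*\gausss$ (tracking that it is really the $\beta^2+\sigma^2/2$ scale that appears, not a cruder surrogate) and in identifying $\int \gauss(x-y)e^{-\|y\|^2/(2\gamma)}\dd y$ exactly as a Gaussian of variance $\sigma^2+\gamma$. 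Once the exponents are pinned down, integrability is immediate.
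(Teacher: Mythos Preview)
Your overall strategy is correct and reaches the right threshold, but it differs from the paper's route in where the sub-Gaussian assumption is spent.

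\medskip
\noindent\textbf{What the paper does.} Following the template of Lemma~\ref{lem: chi2 condition}, the paper writes
\[
\int_{\R^{d}} \frac{\Var_{P}(\gauss(x-\cdot))}{P*\gauss(x)}\dd x \le \int_{\R^{d}}\!\!\int_{\R^{d}} \frac{\gauss(x-y)^{2}}{P*\gauss(x)}\dd x\,\dd P(y),
\]
lower-bounds $P*\gauss(x)\gtrsim e^{-\|x\|^{2}/(2(\sigma^{2}+\gamma))}$ exactly as you do, and then performs the $x$-integral first. That inner integral is a closed-form Gaussian integral, yielding (up to constants) $\exp\!\big(\tfrac{2(\sigma^{2}+\gamma)}{\sigma^{2}(\sigma^{2}+2\gamma)}\|y\|^{2}\big)$; multiplying by the remaining factor $e^{-\|y\|^{2}/\sigma^{2}}$ gives $\exp\!\big(\|y\|^{2}/(\sigma^{2}+2\gamma)\big)$, and then the sub-Gaussian bound \eqref{EQ: subgaussian} is applied to the outer $\dd P(y)$ integral to obtain finiteness precisely when $\beta<\sqrt{(\sigma^{2}+2\gamma)/2}$.

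\medskip
\noindent\textbf{What you do differently.} You integrate in $y$ first, using sub-Gaussianity to assert a \emph{pointwise} density upper bound $(P*\varphi_{\sigma/\sqrt{2}})(x)\lesssim e^{-\|x\|^{2}/(\sigma^{2}+2\beta^{2})}$, and only then integrate in $x$. This is valid, but your justification (``convolving a $\beta$-sub-Gaussian law with $\mathcal{N}(0,(\sigma^{2}/2)\mathrm I_d)$ yields a density with Gaussian tail decay governed by $\beta^{2}+\sigma^{2}/2$'') is not automatic: sub-Gaussianity of $X+Z$ controls tail probabilities, not the convolution density pointwise, and simply dropping the $-\|X\|^{2}/\sigma^{2}$ term in $\E[e^{-\|x-X\|^{2}/\sigma^{2}}]$ gives only the cruder exponent $-(\sigma^{2}-2\beta^{2})/\sigma^{4}$, which is useless once $\beta\ge\sigma/\sqrt{2}$. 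To get the sharp exponent you must keep that term and optimize: for any $u\in\R$, the pointwise inequality $-\|x-X\|^{2}/\sigma^{2}\le -a\|x\|^{2}+u\,x\cdot X$ holds with $a=1/\sigma^{2}-\tfrac{\sigma^{2}}{4}(2/\sigma^{2}-u)^{2}$, whence $\E[e^{-\|x-X\|^{2}/\sigma^{2}}]\le e^{(-a+\beta^{2}u^{2}/2)\|x\|^{2}}$; minimizing over $u$ (at $u=2/(\sigma^{2}+2\beta^{2})$) gives exactly $e^{-\|x\|^{2}/(\sigma^{2}+2\beta^{2})}$. With this lemma in hand your argument goes through.

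\medskip
\noindent\textbf{Comparison.} Both approaches use the same lower bound on $P*\gauss$ and land on the same threshold. The paper's order of integration avoids the pointwise density lemma entirely: the $x$-integral is a routine Gaussian computation and sub-Gaussianity enters only through the scalar MGF bound \eqref{EQ: subgaussian}. Your route is conceptually transparent (bound numerator and denominator separately by explicit Gaussians, then compare exponents) but front-loads the work into proving the sharp pointwise bound above. The Gaussian special case is handled identically in both.
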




\section{Proofs}


\subsection{Proof of Theorem \ref{thm: CLT TV}}

Our argument relies on the CLT in $L^{1}(\R^{d})$; cf. Appendix \ref{APPEN:CLT in Lp}. 
Recall that $P_{n}*\Gauss$ has density $n^{-1}\sum_{i=1}^{n}Y_{i}(x) =: \overline{Y}_{n}(x)$ with $Y_{i}(x) := \gauss(x-X_{i})$. 
The process $Y_{i}$ is jointly measurable and has paths in $L^{1}(\R^{d})$; indeed,
\[
\| Y_{i} \|_{1} = \int_{\R^{d}} \gauss(x-X_{i})\dd x = 1.
\]
In addition, $\EE[Y_i] = P*\gauss$. By Theorem \ref{thm: CLT Lp}, $\sqrt{n}\left(\overline{Y}_{n} - P*\gauss\right)$ converges weakly in $L^{1}(\R^{d})$ to a Gaussian variable if and only if $\int_{\R^{d}}\mspace{-5mu}\sqrt{\Var (Y_{1}(x))}\dd x < \infty$.

One readily verifies that the limit Gaussian variable is $B_{P}^{(\sigma)}$, and by the continuous mapping theorem, we have \begin{align*}
\sqrt{n} \gtv(P_n,P) &= \frac{1}{2} \int_{\R^{d}} \sqrt{n} |\overline{Y}_{n}(x) - P*\gauss(x)|\dd x \\
&\stackrel{d}{\to} \frac{1}{2} \int_{\R^{d}} \big| B_{P}^{(\sigma)}(x)\big|\dd x.\numberthis
\end{align*}

In addition,
\begin{align*}\E\Big[\gtv(P_n,P)\Big]&=\frac{1}{2} \int_{\R^{d}} \E\big[\big|\overline{Y}_{n}(x) - P*\gauss(x)\big|\big]\dd x \\
&\le \frac{1}{2} \int_{\R^{d}}  \sqrt{\E\big[\big|\overline{Y}_{n}(x) - P*\gauss(x)\big|^2\big]}\dd x\\
&=\frac{1}{2\sqrt{n}} \int_{\R^{d}} \sqrt{\Var_{P}(\varphi (x-\cdot))}\dd x.\numberthis
\end{align*}
This completes the proof. \qed


\subsection{Proof of Lemma \ref{lem: TV condition}}
We first note that for $X \sim P$, 
\begin{equation}
\begin{split}
\Var_{P}\big(\gauss(x-\cdot)\big) 
&\le \E[\varphi_{\sigma}^{2}(x - X)] \\
&=\frac{1}{(2\pi \sigma^{2})^{d}} \int_{\R^{d}} e^{-\|x-y\|^{2}/\sigma^{2}} \dd P(y).
\end{split}
\label{eq: TV moment 0}
\end{equation}
Splitting the integral over $\R^{d}$ into $\|y\| \le \|x\|/2$ and $\|y\| > \|x\|/2$, we have 
\begin{equation}
\begin{split}
&\int_{\R^{d}} e^{-\|x-y\|^{2}/\sigma^{2}} \dd P(y) \\
& \le \int_{\|y\| \le \|x\|/2} e^{-\|x-y\|^{2}/\sigma^{2}} \dd P(y)  + \Prob \big(\|X\| > \|x\|/2\big). 
\end{split}
\label{eq: TV moment 1}
\end{equation}
Changing to polar coordinates, we have 
\begin{equation}
\begin{split}
&\int_{\R^{d}} \sqrt{\Prob \big(\|X\| > \|x\|/2\big)} \dd x \\
&=\frac{2^{d+1}\pi^{d/2}}{\Gamma(d/2)} \int_{0}^{\infty} t^{d-1}\sqrt{\Prob\big(\|X\| > t\big)} \dd t. 
\end{split}
\label{eq: TV moment 2}
\end{equation}
Second, recalling that $\|x-y\|^{2} \ge \|x\|^{2}/2-\|y\|^{2}$,  we have 
\begin{align*}
&\int_{\|y\| \le \|x\|/2} e^{-\|x-y\|^{2}/\sigma^{2}} \dd P(y)\\
&\quad\le e^{-\|x\|^{2}/(4\sigma^{2})} \int_{\|y\| \le \|x\|/2} \dd P(y)\le e^{-\|x\|^{2}/(4\sigma^{2})}. 
\numberthis
\label{eq: TV moment 3}
\end{align*}
The square root of the RHS integrates to $(16 \pi \sigma^{2})^{d/2}$. 
Combining (\ref{eq: TV moment 0})--(\ref{eq: TV moment 3}), we obtain inequality (\ref{eq: TV moment bound}). Finally, if $P$ has finite $(2d+\epsilon)$ moments, then by Markov's inequality, we have 
\[
t^{d-1} \sqrt{\Prob \big(\|X\| > t\big)} \le \min \{ t^{d-1}, \sqrt{\E[\|X\|^{2d+\epsilon}]}/t^{1+\epsilon/2} \}.
\]
The RHS is integrable on $[0,\infty)$. This completes the proof. 
\qed


\subsection{Proof of Proposition \ref{cor: TV condition}}\label{SUBSEC: TV condition proof}

We divide the proof into two steps. 

\textbf{Step 1:} We show that 
if $\sqrt{n}\gtv(P_n,P)$ is stochastically bounded, then its first moment is bounded (in $n$). Let $S_{n} = \sum_{i=1}^{n} (\gauss(x-X_{i}) - P*\gauss(x))$. By Hoffmann-J{\o}rgensen's inequality (see \cite[Proposition 6.8]{LeTa1991}), we have 
\[
\E\big[\| S_{n} \|_{1}\big] \lesssim \E\left [\max_{1 \le i \le n} \big\| \gauss(\cdot-X_{i}) - P*\gauss\big\|_{1}\right] + t_{n,0},
\]
where $t_{n,0} = \inf \big\{ t > 0 : \PP\big(\max_{1 \le m \le n} \| S_{m} \|_{1} > t\big) \le  \frac{1}{8} \big\}$. The first term on the RHS is bounded by $2$. In addition, by Montgomery-Smith's inequality \cite[Corollary 3]{Montgomery-Smith1994}, there exists a universal constant $c$ such that 
\[
t_{n,0} \le \inf \big\{ t > 0 : \PP\big( \| S_{n} \|_{1} > t\big) \le  c \big\}.
\]
If $\gtv(P_n,P) = \| S_{n}/\sqrt{n} \|_{1}$ is stochastically bounded, then $\sup_{n}t_{n,0}/\sqrt{n} < \infty$, which implies $\sup_{n} \E\big[\|S_{n}/\sqrt{n} \|_1\big] < \infty$.

\textbf{Step 2:} We prove that if $\sqrt{n}\E\big[\gtv(P_n,P)\big]$ is bounded, then Condition \eqref{eq: TV condition} holds.  Let $k$ be any positive integer. By Fubini's theorem
\begin{align*}
\sqrt{n}\E\Big[\gtv(P_n,P)\Big] 
&\mspace{-3mu}\ge\mspace{-3mu}\frac{1}{2}\mspace{-3mu} \int_{\R^{d}}\mspace{-3mu} \E\big[\big(\mspace{-3mu}\sqrt{n}\big| \overline{Y}_{\mspace{-4mu}n}\mspace{-1mu}(x)\mspace{-2mu} -\mspace{-2mu} P\mspace{-2mu}*\mspace{-2mu}\gauss(x)\big|\big)\mspace{-3mu}\wedge\mspace{-3mu} k\big]\mspace{-3mu}\dd x.
\end{align*}
Since $|Y_i(x)|\mspace{-3mu} \le\mspace{-3mu} (2\pi \sigma^{2})^{-d/2} =: C_{d,\sigma}$, the central limit theorem implies that
\begin{equation}
 \lim_{n} \E\Big[\big(\sqrt{n}\big| \overline{Y}_{n}(x)- P*\gauss(x)\big|\big) \wedge k\Big] = \E\Big[\big|B_{P}^{(\sigma)}(x)\big| \wedge k\Big] \label{eq: uniformity}
\end{equation}
for any $x \in \R^{d}$. Indeed, let $Q_x$ denote the image measure of $P$ under the map $y \mapsto \gauss(y-x)$, and let $\cQ = \{ Q_{x} : x \in \R^{d} \}$. Each $Q \in \cQ$ is supported in $[-C_{d,\sigma},C_{d,\sigma}]$. 
For each $Q \in \cQ$, let $\xi_{1}^{Q},\dots,\xi_{n}^{Q}$ be i.i.d. with common distribution $Q$. Let $\sigma_{Q}^{2}$ denote the variance of $Q$ (note that $\sigma_{Q_x}^{2} = \Var_{P}(\varphi(x-\cdot)) = \Var \big(B_{P}^{(\sigma)}(x)\big)$). Then,
the central limit theorem implies that 
\[
\sum_{i=1}^{n} \frac{\xi_{i}^{Q} - \E[\xi_{i}^{Q}]}{\sqrt{n}} \stackrel{d}{\to} \cN_{\sigma_{Q}^{2}}. 
\]
Since $y \mapsto |y| \wedge k$ is bounded (by $k$) and (Lipschitz) continuous, the  convergence (\ref{eq: uniformity}) follows from the definition of weak convergence.  
Together with Fatou's lemma, we have
\[
\liminf_{n} \sqrt{n}\E[\delta_{\mathsf{TV}}(P_{n}*\Gauss,P*\Gauss)] \ge
\frac{1}{2} \int_{\R^{d}}\E[|B_{P}^{(\sigma)}(x)| \wedge k]\dd x.
\]
Taking $k \to \infty$, we conclude that 
\[
\begin{split}
&\liminf_n\sqrt{n}\E\Big[\gtv(P_n,P)\Big] \ge \frac{1}{2} \int_{\R^{d}} \E\big[\big|B_{P}^{(\sigma)}(x)\big|\big]\dd x \\
&\quad = \frac{1}{\sqrt{2\pi}}\int_{\R^{d}} \sqrt{\Var_{P}(\gauss(x-\cdot))}\dd x,
\end{split}
\]
where the second equality is because $\E[|\xi|] = \sqrt{2/\pi} \sqrt{\E[\xi^{2}]}$ for a centered Gaussian variable $\xi$. This completes the proof. \qed


\subsection{Proof of Theorem \ref{thm: CLT chi2}}

We will apply the CLT in $L^{2}(P*\Gauss)$; cf. Appendix \ref{APPEN:CLT in Lp}.  Let $Z_{i} := \frac{\gauss(\cdot-X_{i})}{P*\gauss} - 1$.
The process $Z_i$ is jointly measurable and has paths almost surely in $L^{2}(P*\Gauss)$; indeed, 
\[
\E\mspace{-3mu}\left[\int_{\R^{d}}\mspace{-5mu} |Z_{i}(x)|^{2} \dd P*\Gauss(x) \right]\mspace{-3mu} =\mspace{-3mu} \int_{\R^{d}}\mspace{-3mu} \frac{\Var_{P}(\gauss(x-\cdot))}{P*\gauss(x)}\dd x < \infty.
\]
Hence, we apply Theorem \ref{thm: CLT Lp} to conclude that $\sum_{i=1}^{n} Z_{i}/\sqrt{n}$ converges weakly in $L^{2}(P*\Gauss)$ to a Gaussian variable. The limit Gaussian variable is $B_{P}^{(\sigma)}/P*\gauss$, and by the continuous mapping theorem, we have 
\begin{align*}
&n \gchi(P_n\|P) 
= \int_{\R^{d}} \left| \sum_{i=1}^{n}Z_{i}(x)/\sqrt{n} \right |^{2}\dd P*\Gauss (x) \\
& \stackrel{d}{\to} \int_{\R^{d}} \left | \frac{B_{P}^{(\sigma)}(x)}{P*\gauss(x)} \right |^{2}\dd P*\Gauss(x) = \int_{\R^{d}}  \frac{\big|B_{P}^{(\sigma)}(x)\big|^2}{P*\gauss(x)}\dd x.\numberthis
\end{align*}

Finally, since $L^{2}(P*\Gauss)$ is a Hilbert space, we have
\begin{align*}
&n \E\big[\gchi(P_n\|P)\big] = \E \left [ \bigg \| \sum_{i=1}^{n}Z_{i}/\sqrt{n}  \bigg \|_{L^{2}(P*\Gauss)}^{2} \right ] \\
& =\sum_{i=1}^{n} \frac{\E\big[\| Z_{i} \|^{2}_{L^{2}(P*\Gauss)}\big]}{n}=\int_{\R^{d}}\frac{\Var_{P}(\gauss(x-\cdot))}{P*\gauss(x)}\dd x.\numberthis
\end{align*}
This completes the proof.\qed


\subsection{Proof of Lemma \ref{lem: chi2 condition}}
We first note that
\[
\begin{split}
&\int_{\R^{d}} \frac{\Var_{P}(\gauss(x-\cdot))}{P*\gauss(x)}\dd x \le \int_{\R^{d}} \int_{\R^{d}} \frac{\gauss(x-y)^{2}}{P*\varphi_{ \sigma}(x)}\dd x \dd P(y) \\
&= \int_{\R^{d}} e^{-\| y \|^{2}/\sigma^{2}} \frac{1}{(2\pi \sigma^{2})^{d}} \int_{\R^{d}} \frac{e^{2y \cdot x/\sigma^{2}-\|x\|^{2}/\sigma^{2}}}{P*\varphi_{ \sigma}(x)}\dd x \dd P(y).
\end{split}
\]
Since $\|x\mspace{-3mu}-\mspace{-3mu}y\|^{2}\mspace{-3mu} \le\mspace{-3mu} (1\mspace{-3mu}+\mspace{-3mu}\eta)\|x\|^{2}\mspace{-3mu}+\mspace{-3mu}(1\mspace{-3mu}+\mspace{-3mu}\frac{1}{\eta})\|y\|^{2},\ \mspace{-3mu}\forall \eta\mspace{-3mu} \in\mspace{-3mu} (0,1)$, we have 
\[
P*\gauss(x) \ge \frac{e^{-(1+\eta)\|x\|^{2}/(2\sigma^{2})}}{(2\pi \sigma^{2})^{d/2}}  \underbrace{\int_{\R^{d}} e^{-(1+\eta^{-1})\|y\|^{2}/(2\sigma^{2})} \dd P(y)}_{=: C_{P,\eta}},
\]
so that 
\[
\begin{split}
\frac{1}{(2\pi \sigma^{2})^{d}} \int_{\R^{d}} &\frac{e^{2y \cdot x/\sigma^{2}-\|x\|^{2}/\sigma^2}}{P*\varphi_{ \sigma}(x)}\dd x\\
&\le \frac{1}{(1-\eta)^{d/2} C_{P,\eta}} \int_{\R^{d}} e^{2y \cdot x/\sigma^{2}} \varphi_{\sigma/\sqrt{1-\eta}}(x) \dd x \\
&=\frac{1}{(1-\eta)^{d/2} C_{P,\eta}} \exp   \left ( \frac{2 \| y \|^{2}}{(1-\eta)
\sigma^{2}} \right). 
\end{split}
\]
Conclude that 
\[
\begin{split}
\int_{\R^{d}}& \frac{\Var_{P}(\gauss(x-\cdot))}{P*\gauss(x)}\dd x\\
&\le \frac{1}{(1-\eta)^{d/2} C_{P,\eta}} \int_{\R^{d}} \exp \left ( \frac{(1+\eta)\| y \|^{2}}{(1-\eta)\sigma^{2}} \right ) \dd P(y) \\
&= \frac{1}{(1-\eta)^{d/2} C_{P,\eta}} \left [ 1-\frac{2(1+\eta)\beta^{2}}{(1-\eta)\sigma^{2}} \right]^{-d/2}
\end{split}
\]
by (\ref{EQ: subgaussian}), 
provided that
$\frac{(1+\eta)}{(1-\eta)\sigma^{2}} < \frac{1}{2\beta^{2}}$, i.e., $\beta < \sigma \sqrt{\frac{1-\eta}{2(1+\eta)}}$.
Since $\eta \in (0,1)$ is arbitrary, the desired result follows. \qed


\subsection{Proof of Lemma \ref{lem: gaussian}}
By translation invariance of Lebesgue measure, we may~assume $P$ has zero mean. The proof is similar to that of Lemma \ref{lem: chi2 condition}, so we only outline required modifications. Observe that 
\[
\begin{split}
P*\gauss (x) &\gtrsim e^{-\|x\|^{2}/(2\sigma^{2})} \int_{\R^{d}} e^{x \cdot y/\sigma^{2}} \varphi_{\sqrt{1/(\sigma^{-2}+\gamma^{-1})}} (y) \dd y \\
&=e^{-\|x\|^{2}/2(\sigma^{2} + \gamma)}, 
\end{split}
\]
so that 
\[
\begin{split}
\int_{\R^{d}} \frac{e^{\frac{2y \cdot x-\|x\|^{2}}{\sigma^2}}}{P*\varphi_{ \sigma}(x)}\dd x &\lesssim \mspace{-3mu}\int_{\R^{d}} \mspace{-5mu}\exp \mspace{-1.5mu}\left ( \frac{2 x \cdot y}{\sigma^{2}}\mspace{-2mu} -\mspace{-2mu} \frac{\sigma^{2}+2\gamma}{2\sigma^{2} (\sigma^{2}+\gamma)} \| x \|^{2}\mspace{-3mu} \right)\mspace{-3mu} \dd x \\\mspace{-3mu}
&\lesssim \exp \left ( \frac{2(\sigma^{2}+\gamma)\| y \|^{2}}{\sigma^{2}(\sigma^{2}+2\gamma)}  \right ).
\end{split}
\]
Multiplying $e^{-\|y\|^{2}/\sigma^{2}}$ to the RHS leads to $\exp \left ( \frac{\|y\|^{2}}{\sigma^{2}+2\gamma} \right )$, whose integration w.r.t. $P$ is finite as soon as $\frac{1}{\sigma^{2} + 2\gamma} < \frac{1}{2\beta^{2}}$, i.e., $\beta < \sqrt{\frac{\sigma^{2} + 2\gamma}{2}}$.
If $P$ is Gaussian with covariance matrix $\Sigma$, then we may take $\beta = \sqrt{\lambda_{\max}}$ and $\gamma = \lambda_{\min}$.
\qed

\appendices

\section{Central Limit Theorem in $L^{p}$ Spaces}
\label{APPEN:CLT in Lp}

We summarize basic CLT results in $L^{p}$ spaces with $1 \le p < \infty$. Let $\mu$ be a $\sigma$-finite measure on a measurable space $(S,\mathcal{S})$ with $\mathcal{S}$ being countably generated, and let $L^{p} = L^{p}(S,\mathcal{S},\mu),  1 \leq p < \infty$ be the space of real-valued measurable functions $f$ on $S$ such that $\| f \|_{p} := ( \int_{S} |f(s)|^{p}\dd \mu (s) )^{1/p} < \infty$. 
As usual, we identify functions $f,g$ on $S$ if $f = g$ $\mu$-a.e.; under this identification, the space $(L^{p}, \| \cdot \|_{p})$ is a separable Banach space (and thus Polish). 
Recall that any Borel measurable random variable with values in a Polish space is tight (Radon) by Ulam's theorem \cite[Theorem 7.1.3]{Du2002}. A Borel measurable random variable with values in $L^{p}$ is called an $L^{p}$-valued random variable.

For $1 \leq p < \infty$, let $q$ be its  conjugate index, i.e., $\frac{1}{p} +\frac{1}{q} = 1$ ($q=\infty$ if $p=1$). 
For an $L^{p}$-valued random variable $X$, let
\[
\tilde{X} (f) = \int_{S} f(s) X(s)\dd \mu (s), \ f \in L^{q},
\]
which is a stochastic process index by $L^{q}$, the dual space of $L^{p}$. An $L^{p}$-valued random variable $G$ is Gaussian if $\{ \tilde{G}(f) :  f \in L^{q}\}$ is a Gaussian process. 

Let $X$ be an $L^{p}$-valued random variable  such that $\E[\tilde{X}(f)] = 0$ and $\E[\tilde{X}(f)^{2}] < \infty$ for all $f \in L^{q}$. $X$ is said to be pre-Gaussian if there exists a centered $L^{p}$-valued Gaussian random variable $G$ with the same covariance function as $X$, i.e, $\E[\tilde{G}(f)\tilde{G}(g)]=\E[\tilde{X}(f)\tilde{X}(g)]$ for all $f,g \in L^{q}$.

Note that if $X$ is a jointly measurable (Gaussian) process with paths in $L^{p}$ then $X$ can be identified to be an $L^{p}$-valued (Gaussian, resp.) random variable and vice versa; cf. \cite{Bycz1977}.

\begin{theorem}[Proposition 2.1.11 in \cite{VaWe1996}]
\label{thm: CLT Lp}
Let $1 \le p < \infty$,~and $X,X_{1},\dots,X_{n}$ be i.i.d. $L^{p}$-valued random variables with zero mean (in the sense of Bochner). The following are~equivalent:
\begin{enumerate}
\item[(i)] There exists a centered Gaussian variable $G$ in $L^{p}$ with the same covariance function as $X$ such that $S_{n}:=\sum_{i=1}^{n}X_{i}/\sqrt{n}$ converges weakly in $L^{p}$ to $G$.
\item[(ii)] $\int_{S} (\E[ |X(s)|^{2} ])^{p/2} \dd \mu (s) < \infty$ and $\PP\big (\| X \|_{p} > t\big) = o(t^{-2})$ as $t \to \infty$.
\end{enumerate} 
\end{theorem}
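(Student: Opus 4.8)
The plan is to prove the equivalence by splitting it into necessity $(i)\Rightarrow(ii)$ and sufficiency $(ii)\Rightarrow(i)$, exploiting throughout that the two conditions in $(ii)$ decouple into a \emph{pre-Gaussian} (covariance) condition, namely $\int_S(\E[X(s)^2])^{p/2}\dd\mu(s)<\infty$, and a \emph{tail} (flat-concentration) condition $\PP(\|X\|_p>t)=o(t^{-2})$. This mirrors the general theory of the CLT in separable Banach spaces, specialized to the geometry of $L^p$.

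\textbf{Necessity.} Suppose $S_n\Rightarrow G$ with $G$ a centered $L^p$-valued Gaussian variable sharing the covariance of $X$. Matching covariances on the diagonal gives $\E[G(s)^2]=\E[X(s)^2]$ for $\mu$-a.e.\ $s$, so by Fubini and the Gaussian absolute-moment identity $\E|G(s)|^p=c_p(\E[G(s)^2])^{p/2}$ (with $c_p=\E|Z|^p$, $Z\sim\cN(0,1)$),
\[
\E\big[\|G\|_p^p\big]=c_p\int_S\big(\E[X(s)^2]\big)^{p/2}\dd\mu(s).
\]
Since a Gaussian element of a separable Banach space has finite moments (Fernique), the left-hand side is finite, yielding the integrability condition. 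The tail condition is a general necessary condition for the CLT: weak convergence of the normalized i.i.d.\ sums forces the largest summand to be asymptotically negligible, i.e.\ $\max_{i\le n}\|X_i\|_p/\sqrt n\to 0$ in probability, which by i.i.d.-ness is equivalent to $n\,\PP(\|X\|_p>\epsilon\sqrt n)\to 0$ for every $\epsilon>0$, i.e.\ $\PP(\|X\|_p>t)=o(t^{-2})$.

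\textbf{Sufficiency.} The integrability condition first produces the limit candidate: constructing a jointly measurable centered Gaussian process $G$ with $\Cov(G(s),G(t))=\Cov(X(s),X(t))$, the same moment computation gives $\E\|G\|_p^p=c_p\int_S(\E[X(s)^2])^{p/2}\dd\mu<\infty$, so $G$ is a bona fide $L^p$-valued Gaussian variable (pre-Gaussianity). It then remains to show $S_n\Rightarrow G$. Finite-dimensional convergence is immediate: for $f\in L^q$ the scalars $\langle f,X_i\rangle$ are i.i.d., mean zero, with finite variance (H\"older together with the integrability condition), so the classical CLT gives $\tilde S_n(f)\to\tilde G(f)$, and likewise for finite families, identifying the limiting covariance as that of $X$. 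By the characterization of weak convergence in the separable (hence Polish) Banach space $L^p$, it thus suffices to establish asymptotic tightness of $\{S_n\}$.

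\textbf{Tightness, the crux.} I would proceed by truncation at level $\sqrt n$, writing $X_i=X_i\mathds 1_{\{\|X_i\|_p\le\sqrt n\}}+X_i\mathds 1_{\{\|X_i\|_p>\sqrt n\}}$. The tail condition makes the large part contribute negligibly to $S_n$ in $L^p$, since $n\,\PP(\|X\|_p>\sqrt n)\to 0$. For the truncated-and-recentered part, the goal is a uniform $L^p$-norm bound that transfers the coordinatewise second-moment control into control of the full norm; the device is symmetrization followed by moment inequalities (Khintchine/Rosenthal type) applied \emph{under the integral sign} via Fubini, which, using the type/cotype-$2$ structure of $L^p$, bounds the limiting size of the truncated sums by the pre-Gaussian quantity $\int_S(\E[X(s)^2])^{p/2}\dd\mu$. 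The tail hypothesis plays a double role here: it both discards the large summands and prevents the truncated second moments from inflating this control. I expect the main obstacle to be precisely this step—verifying that the two hypotheses together are exactly enough (converting the scalar moment bounds into an $L^p$-norm estimate uniform in $n$, and then patching the truncation error), since this is where the special geometry of $L^p$ and the interplay of both conditions are genuinely used. Combining negligibility of the large part, the uniform moment bound on the truncated part, and finite-dimensional convergence then yields $S_n\Rightarrow G$.
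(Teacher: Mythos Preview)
Your necessity direction matches the paper's treatment (Fernique for the integrability, and the standard max-term argument for the tail). The main divergence is in the sufficiency direction. The paper does \emph{not} attempt to prove tightness of $S_n$ from scratch: it invokes Theorem~10.10 of Ledoux--Talagrand (and the discussion following it) for the statement ``$S_n$ converges weakly in $L^p$ iff $X$ is pre-Gaussian and $\PP(\|X\|_p>t)=o(t^{-2})$,'' and then only supplies the missing piece, namely that pre-Gaussianity in $L^p$ is equivalent to $\int_S(\E[X(s)^2])^{p/2}\dd\mu<\infty$. For the latter, the paper's construction of the Gaussian limit is more explicit than yours: it fixes a complete orthonormal system $(h_i)$ of $L^2(\Omega,\mathcal F,\PP)$, sets $G_n(\cdot)=\sum_{i=1}^n Z_i\,\E[h_i X(\cdot)]$ with $Z_i$ i.i.d.\ standard normal, and checks this is Cauchy in $L^p(\Omega;L^p(S))$ directly from the integrability hypothesis. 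Your proposal instead sketches a full self-contained tightness proof via truncation, symmetrization, and Rosenthal/Khintchine under the integral sign---this is indeed (roughly) how the Ledoux--Talagrand argument for $L^p$ proceeds, so the route is sound, but it is substantially longer and, as you yourself flag, the crux is left at the level of a plan.

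One imprecision worth correcting: your appeal to ``the type/cotype-$2$ structure of $L^p$'' is misleading, since $L^p$ has type~$2$ only for $p\ge 2$ and cotype~$2$ only for $p\le 2$; neither holds uniformly across the range $1\le p<\infty$. The argument that actually goes through for all $p$ is the one you also mention---Rosenthal's inequality applied \emph{pointwise} in $s$ and then integrated via Fubini---which exploits the lattice structure of $L^p$ rather than type/cotype~$2$ per se. If you keep the proof self-contained, that is the mechanism to make precise; if not, citing Ledoux--Talagrand as the paper does is the cleaner option.
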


\begin{proof}
This is \cite[Proposition 2.1.11]{VaWe1996} but it does not contain a proof. The proposition cites Theorem 10.10 in \cite{LeTa1991}, but we shall complement some arguments for the reader's convenience. Theorem 10.10 in \cite{LeTa1991} and the discussion following it imply that $S_{n}$ converges weakly in $L^{p}$ if and only if $X$ is pre-Gaussian and $\PP\big(\| X \|_{p} >t\big) = o(t^{-2})$. So we only have to verify that $X$ is pre-Gaussian if and only if $\int_{S} (\E[ |X(s)|^{2} ])^{p/2}\dd \mu (s) < \infty$. 

If $X$ is pre-Gaussian, then 
\[
\infty\mspace{-3mu} >\mspace{-2mu} \E[\| G \|_{p}^{p}]\mspace{-3mu} =\mspace{-5mu} \int_{S} \mspace{-4mu}\E[|G(\mspace{-0.5mu}s\mspace{-0.5mu})|^{p}]\dd \mu(\mspace{-0.5mu}s\mspace{-0.5mu})\mspace{-3mu} =\mspace{-2mu} c_{p}\mspace{-4mu} \int_{S}\mspace{-5mu} \big(\E[|X(\mspace{-0.5mu}s\mspace{-0.5mu})|^{2}]\big)^{\mspace{-2mu}p/2}\mspace{-3mu}\dd \mu(\mspace{-0.5mu}s\mspace{-0.5mu})
\]
with $c_{p} = \E[|Z|^{p}]$ for $Z \sim \cN (0,1)$. 
For the ``if'' part, let $B=L^{p}$ and $B' = L^{q}$.  Since $B$ is separable, we may assume that $X$ is defined on a probability space $(\Omega,\mathcal{F},\PP)$ with $\mathcal{F}$ countably generated. Then $L^{2}(\Omega, \mathcal{F},\PP)$ is separable and so there exists a complete orthonormal system $(h_{i})_{i=1}^{\infty}$ of $L^{2}(\Omega,\mathcal{F},\PP)$. Define a Gaussian variable $G_{n}(\cdot)= \sum_{i=1}^{n} Z_{i}\E[h_{i}X(\cdot)]$ (defined as a $B$-valued random variable), where $(Z_{i})_{i=1}^{\infty}$ is a sequence of independent $\cN (0,1)$ random variables.
We show that $G_{n}$ converges in $L^{p}(B) = L^{p}(\Omega,\mathcal{F},\PP; B)$. To this end, observe that for $n > m$, 
\[
\begin{split}
&\E\big[ \| G_{n} - G_{m} \|_{p}^{p} \big]  = \int_{S} \E \left [ \left | \sum_{i=m+1}^{n} Z_{i} \E[ h_{i} X (s)] \right |^{p} \right ]\dd \mu (s) \\
&\quad =c_{p} \int_{S} \left \{  \sum_{i=m+1}^{n} \{  \E[h_{i}X(s)] \}^{2} \right \}^{p/2}\dd \mu (s).
\end{split}
\]
Hence, $G_{n}$ is Cauchy in $L^{p}(B)$ and so there exists a $B$-valued random variable $G$ such that $\E\big[\| G_{n} - G \|_{p}^{p}\big] \to 0$. It is not difficult to see that $G$ is Gaussian with $\E[\tilde{G}(f)] = \lim_{n}\E[\tilde{G}_{n}(f)] = 0$ and $\E[\tilde{G}(f)^{2}] = \lim_{n} \E[\tilde{G}_{n}(f)^{2}] = \E[\tilde{X}(f)^{2}]$ for every $f \in B'$. Thus, $X$ is pre-Gaussian. 
\end{proof}

\newpage

\bibliographystyle{IEEEtran}
\bibliography{ref}

\begin{thebibliography}{10}
\providecommand{\url}[1]{#1}
\csname url@samestyle\endcsname
\providecommand{\newblock}{\relax}
\providecommand{\bibinfo}[2]{#2}
\providecommand{\BIBentrySTDinterwordspacing}{\spaceskip=0pt\relax}
\providecommand{\BIBentryALTinterwordstretchfactor}{4}
\providecommand{\BIBentryALTinterwordspacing}{\spaceskip=\fontdimen2\font plus
\BIBentryALTinterwordstretchfactor\fontdimen3\font minus
  \fontdimen4\font\relax}
\providecommand{\BIBforeignlanguage}[2]{{%
\expandafter\ifx\csname l@#1\endcsname\relax
\typeout{** WARNING: IEEEtran.bst: No hyphenation pattern has been}%
\typeout{** loaded for the language `#1'. Using the pattern for}%
\typeout{** the default language instead.}%
\else
\language=\csname l@#1\endcsname
\fi
#2}}
\providecommand{\BIBdecl}{\relax}
\BIBdecl

\bibitem{csiszar2004information}
I.~Csisz{\'a}r and P.~C. Shields, ``Information theory and statistics: A
  tutorial,'' \emph{Foundations and Trends{\textregistered} in Communications
  and Information Theory}, vol.~1, no.~4, pp. 417--528, Dec. 2004.

\bibitem{nowozin2016f}
S.~Nowozin, B.~Cseke, and R.~Tomioka, ``f-gan: Training generative neural
  samplers using variational divergence minimization,'' in \emph{Advances in
  Neural Information Processing Systems (NeurIPS-2016)}, Barcelona, Spain, Dec.
  2016, pp. 271--279.

\bibitem{uehara2016generative}
M.~Uehara, I.~Sato, M.~Suzuki, K.~Nakayama, and Y.~Matsuo, ``Generative
  adversarial nets from a density ratio estimation perspective,'' \emph{arXiv
  preprint arXiv:1610.02920}, 2016.

\bibitem{devroye1990no}
L.~Devroye and L.~Gyorfi, ``No empirical probability measure can converge in
  the total variation sense for all distributions,'' \emph{Ann. Stat.},
  vol.~18, no.~3, pp. 1496--1499, 1990.

\bibitem{Goldfeld2019convergence}
Z.~Goldfeld, K.~Greenewald, Y.~Polyanskiy, and J.~Weed, ``Convergence of
  smoothed empirical measures with applications to entropy estimation,''
  \emph{arXiv preprint arXiv:1905.13576}, May 2019.

\bibitem{Goldfeld2020GOT}
Z.~Goldfeld and K.~Greenewald, ``Gaussian-smoothed optimal transport: Metric
  structure and statistical efficiency,'' in \emph{International Conference on
  Artificial Intelligence and Statistics (AISTATS-2020)}, Palermo, Sicily,
  Italy, Jun. 2020.

\bibitem{goldfeld2020limit}
Z.~Goldfeld and K.~Kato, ``Limit distribution theory for smooth {Wasserstein}
  distance with applications to generative modeling,'' \emph{arXiv preprint
  arXiv:2002.01012}, Feb. 2020.

\bibitem{LeTa1991}
M.~Ledoux and M.~Talagrand, \emph{Probability in {B}anach {S}paces:
  {I}soperimetry and {P}rocesses}.\hskip 1em plus 0.5em minus 0.4em\relax
  Springer. New York, 1991.

\bibitem{VaWe1996}
A.~van~der Vaart and J.~A. Wellner, \emph{Weak Convergence and Empirical
  Processes: With Applications to Statistics}.\hskip 1em plus 0.5em minus
  0.4em\relax Springer, 1996.

\bibitem{GiNi2016}
E.~Gin\'e and R.~Nickl, \emph{Mathematical {F}oundations of
  {I}nfinite-{D}imensional {S}tatistical {M}odels}.\hskip 1em plus 0.5em minus
  0.4em\relax Cambridge University Press, 2016.

\bibitem{del1999central}
E.~del Barrio, E.~Gin{\'e}, and C.~Matr{\'a}n, ``Central limit theorems for the
  {Wasserstein} distance between the empirical and the true distributions,''
  \emph{The Annals of Probability}, vol.~27, no.~2, pp. 1009--1071, Apr. 1999.

\bibitem{bobkov2014berry}
S.~G. Bobkov, G.~P. Chistyakov, and F.~G{\"o}tze, ``Berry--{E}sseen bounds in
  the entropic central limit theorem,'' \emph{Probab. Theory Relat. Fields},
  vol. 159, no. 3-4, pp. 435--478, 2014.

\bibitem{bobkov2019renyi}
------, ``R{\'e}nyi divergence and the central limit theorem,'' \emph{Ann.
  Probab.}, vol.~47, no.~1, pp. 270--323, 2019.

\bibitem{courtade2020bounds}
T.~A. Courtade, ``Bounds on the {P}oincar{\'e} constant for convolution
  measures,'' in \emph{Annales de l'Institut Henri Poincar{\'e},
  Probabilit{\'e}s et Statistiques}, vol.~56, no.~1.\hskip 1em plus 0.5em minus
  0.4em\relax Institut Henri Poincar{\'e}, 2020, pp. 566--579.

\bibitem{bobkov2019entropic}
S.~G. Bobkov and A.~Marsiglietti, ``Entropic {CLT} for smoothed convolutions
  and associated entropy bounds,'' \emph{arXiv preprint arXiv:1903.03666},
  2019.

\bibitem{FournierGuillin2015}
N.~Fournier and A.~Guillin, ``On the rate of convergence in wasserstein
  distance of the empirical measure,'' \emph{Probability Theory and Related
  Fields}, vol. 162, pp. 707--738, 2015.

\bibitem{nguyen2010estimating}
X.~Nguyen, M.~J. Wainwright, and M.~I. Jordan, ``Estimating divergence
  functionals and the likelihood ratio by convex risk minimization,''
  \emph{IEEE Trans. Inf. Theory}, vol.~56, no.~11, pp. 5847--5861, Nov. 2010.

\bibitem{sriperumbudur2009integral}
B.~K. Sriperumbudur, K.~Fukumizu, A.~Gretton, B.~Sch{\"o}lkopf, and G.~R.
  Lanckriet, ``On integral probability metrics, $\phi$-divergences and binary
  classification,'' \emph{arXiv preprint arXiv:0901.2698}, 2009.

\bibitem{villani2008optimal}
C.~Villani, \emph{Optimal {T}ransport: Old and {N}ew}.\hskip 1em plus 0.5em
  minus 0.4em\relax Springer Science \& Business Media, 2008, vol. 338.

\bibitem{muller1997integral}
A.~M{\"u}ller, ``Integral probability metrics and their generating classes of
  functions,'' \emph{Advances in Applied Probability}, vol.~29, no.~2, pp.
  429--443, 1997.

\bibitem{Folland1999}
G.~B. Folland, \emph{Real Analysis, Second Edition}.\hskip 1em plus 0.5em minus
  0.4em\relax Wiley, 1999.

\bibitem{Mc1989}
C.~McDiarmid, ``On the method of bounded differences,'' in \emph{Surveys in
  Combinatorics}.\hskip 1em plus 0.5em minus 0.4em\relax Cambridge University
  Press, 1989.

\bibitem{hsu2012tail}
D.~Hsu, S.~Kakade, and T.~Zhang, ``A tail inequality for quadratic forms of
  subgaussian random vectors,'' \emph{Electronic Communications in
  Probability}, vol.~17, 2012.

\bibitem{Montgomery-Smith1994}
S.~J. Montgomery-Smith, ``Comparison of sums of independent identically
  distributed random variables,'' \emph{Probab. Math. Statist.}, vol.~14, pp.
  281--285, 1994.

\bibitem{Du2002}
R.~M. Dudley, \emph{Real Analysis and Probability}.\hskip 1em plus 0.5em minus
  0.4em\relax Cambridge University Press, 2002.

\bibitem{Bycz1977}
T.~Byczkowski, ``Gaussian measures on {$L_{p}$} spaces $0 \le p <\infty$,''
  \emph{Studia Math.}, vol.~59, pp. 249--261, 1977.

\end{thebibliography}

\end{document}